\documentclass[dvipdfmx]{article}
\usepackage{amsmath, amssymb, amsthm, delarray}

\usepackage[top =20mm, left=35mm,right=37mm]{geometry}
\usepackage{graphicx}
\usepackage{tikz}
\usetikzlibrary{intersections,calc,arrows.meta}

\newtheorem{thm}{Theorem}[section]
\newtheorem*{theorem*}{Theorem}

\newtheorem{prop}[thm]{Proposition}
\newtheorem{cor}[thm]{Corollary}
\theoremstyle{definition}

\newtheorem{rem}[thm]{Remark}

\numberwithin{equation}{section}
\theoremstyle{remark}

\begin{document}

\title{\textbf{Bounded intervals containing a given number of primes}}

\author{Keiju Sono}

\date{}
\allowdisplaybreaks

\maketitle 
\noindent
\begin{abstract}
Let $m$ be a non-negative integer and $f(x)$  a positive, non-decreasing function satisfying certain conditions.  We give an explicit lower bound for the number of integers  $n\leq x$ such that $\#([n, n+f(n)] \cap \mathbb{P})=m$ for  sufficiently large $x$, where $\mathbb{P}$ denotes the set of prime numbers. This work extends the results of Mastrostefano in  \cite{Mas1}, \cite{Mas2} and of Freiberg in \cite{F},  and also makes the lower bound in \cite{Mas2} explicit. In addition, we show that if the interval length is sufficiently large, then there exist infinitely many bounded intervals of the same length that contain exactly a prescribed number of primes.

\footnote[0]{2020 {\it Mathematics Subject Classification}. 11N36}
\footnote[0]{{\it Key Words and Phrases}. Small gaps between primes, Maynard-Tao sieve}
\end{abstract}

%%%%%%%%%%%%%%%%%%%%%%%%%%%%%%%%%%%%%%%%%%%%%%%%%%%%%%%%%%%%%%%%%%%%%%%%%%%%%%%%%%%%%%%%%%%%%%%%%%%%%%%%%%%%
%%%%%%%%%%%%%%%%%%%%%%%%%%%%%%%%%%%%%%%%%%%%%%%%%%%%%%%%%%%%%%%%%%%%%%%%%%%%%%%%%%%%%%%%%%%%%%%%%%%%%%%%%%%%

\section{Introduction}
The distribution of prime numbers in short intervals is a central topic in analytic number theory. In particular, since the introduction of the GPY sieve \cite{GPY1}-\cite{GPY4} and  the breakthroughs of Zhang \cite{Z} and Maynard \cite{May1}, this field has continued to develop rapidly. The main objective of this area is to clarify how many prime numbers can be contained in intervals of a given length. For example, Zhang proved that, for $H=7.0\times 10^{7}$, there exist infinitely many positive integers $n$ such that the interval $[n, n+H]$ contains at least two primes. 

This result was soon sharpened : Maynard proved the same claim for $H=600$, and the DHJ Polymath \cite{Polymath2} further reduced  $H$ to $246$. Maynard also showed that for any $m\in \mathbb{N}$, there exists a positive integer $H_{m}\asymp m^{3}e^{4m}$ such that for infinitely many positive integers $n$, the interval $[n, n+H_{m}]$ contains at least $m$ primes.

On the other hand, studying the existence of intervals of fixed length containing exactly a given number of primes is more delicate, as it requires not only adapting the Maynard-Tao sieve but also precise estimates of the number of primes. Several results are known in the cases  where  the length of the interval increases slowly as $x \to \infty$. Freiberg \cite{F} proved that for any fixed real number $\lambda$ and any non-negative integer $m$, if $x$ is sufficiently large in terms of $\lambda$ and $m$, then 
\[
\# \{n \leq x \; | \; \# ([n, n+\lambda \log n] \cap \mathbb{P})=m \}\geq x^{1-\varepsilon (x)},
\]
where $\mathbb{P}$ denotes the set of primes and $\varepsilon (x)$ is a certain function that tends to zero as $x\to \infty$. Mastrostefano \cite{Mas1} sharpened Freiberg's result and showed that the left-hand side of the above is $\gg _{m}x/\log x$.  Shortly thereafter, he also proved that there exists a positive proportion of integers $n\leq x$ such that the interval $[n, n+\lambda \log n]$ contains exactly $m$ primes. More specifically, he proved that 
\[
d_{\lambda ,m}(x):=\frac{\# \{n\leq x \; | \; \# ([n, n+\lambda \log n] \cap \mathbb{P})=m \}}{x}\gg \lambda ^{k+1}e^{-Dk^{4}\log k}
\]
for a certain absolute constant $D>0$, if $x$ is sufficiently large in terms of $m$ and $\lambda$, where $k=C\exp (49m/C^{\prime})$ for some positive constants $C$, $C^{\prime}$. 

The main aim of this paper is to provide an explicit evaluation of the proportion of positive integers $n \leq x$ for which more general intervals of the form $[n, n+f(n)]$ contain exactly $m$ primes, where $f$ is a non-decreasing function that satisfies certain conditions. For two functions $f(x)$ and $g(x)$, we write $f(x)\ll g(x)$, or equivalently, $f(x)=O(g(x))$, if there exists a constant $C>0$, independent of $x$, such that $|f(x)|\leq Cg(x)$ holds for all sufficiently large $x$. The constant $C$ is called the {\it implied constant}.

We prove the following theorem. 
\begin{thm}
Let $m$ be a sufficiently large positive integer and put $k=\lfloor \exp (245m) \rfloor$. Let $f(x)$ be a non-decreasing function with $f(x)\geq 8e^{\gamma}k\log k$ $ ($$\gamma =0.57721\ldots$ denotes the Euler-Mascheroni constant$)$  and
\begin{equation}
\label{21.5}
f(x) \leq \frac{C}{k^{4}\log ^{2}k}\log x
\end{equation}
for any fixed absolute constant $C>0$ and any sufficiently large $x$, and 
\begin{equation}
\label{28}
f(32xf(x))\leq 2f(x).
\end{equation}
Put $g(x):=20x f(x)$. Then, for any sufficiently large $x$, we have
\begin{equation}
\label{MT}
\# \{n\leq x \; | \; \#([n, n+f(n)] \cap \mathbb{P})=m \}\gg \frac{x}{k^{2}\log k}e^{-3k^{4}\log k}\left( \frac{f(g^{-1}(x))}{\log g^{-1}(x)} \right)^{k}.
\end{equation}
Here, the implied constant is independent of $m$ and $x$. Furthermore, the same estimate remains valid even under the additional condition  that "all primes in the interval $[n, n+f(n)]$ are at least $\gg \frac{f(g^{-1}(x))}{k\log k}$ apart" to the set of the  left-hand side.
\end{thm}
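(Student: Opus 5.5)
We would follow the strategy of Mastrostefano \cite{Mas2} and Freiberg \cite{F}, but keep every dependence on $k$ explicit. The set-up is a dyadic range $N<n\le 2N$ with $N$ close to $g^{-1}(x)$. On it we take an admissible $k$-tuple $\mathcal{H}=\{h_1,\dots,h_k\}$ of the shape $h_i = W' \cdot i$ scaled to fit. More precisely, the $h_i$ are chosen in $[0, f(N)]$ with spacing $\asymp f(N)/(k\log k)$. The admissibility is arranged by making all $h_i$ divisible by the primes up to $k$.

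The spacing constant is what produces the hypothesis $f\ge 8e^{\gamma}k\log k$ and the separation claim in the final statement: the primes found are among the $n+h_i$, so they are automatically $\gg f(g^{-1}(x))/(k\log k)$ apart. We then consider the shifted forms $n+h_i$ with $n\equiv \nu_0 \pmod W$, where $W=\prod_{p\le D_0}p$. This ensures that for $n$ in the sieve support, every integer of $[n,n+f(n)]$ other than the $n+h_i$ has a small prime factor. That gap-filling is the role of the condition $f(32xf(x))\le 2f(x)$ and of $g(x)=20xf(x)$: they let $f(n)$ be essentially constant (up to a factor $2$) across the range on which the CRT/covering construction is made.

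The sieve itself is the Maynard--Tao weights $w_n$ with smooth function $F$ supported on the simplex, sieve level $R=N^{\theta/2-\delta}$. From them we form the standard sums
\[
S_1=\sum_n w_n,\qquad S_2^{(i)}=\sum_n \mathbf 1_{\mathbb P}(n+h_i)w_n,
\]
and we also need the off-diagonal sums $\sum_n \mathbf 1_{\mathbb P}(n+h_i)\mathbf 1_{\mathbb P}(n+h_j)w_n$, or alternatively the sums with a prime factor $p\mid n+h_i$, $p$ small. With the Maynard choice of $F$, one gets $\sum_i S_2^{(i)}/S_1\ge (\theta/2)\log k$, and this forces at least $m+1$ primes among the $n+h_i$ on a weighted positive proportion. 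To get \emph{exactly} $m$ we use Freiberg's trick: choose $k=\lfloor e^{245m}\rfloor$ so that $\rho$-many primes occur, then delete. Concretely, we show that the weighted count of $n$ with at least $m$ primes exceeds the weighted count with at least $m+1$ primes. This is done with the inclusion--exclusion functional
\[
\sum_n w_n\Bigl(\sum_{i}\mathbf 1_{\mathbb P}(n+h_i)-m-\binom{\#}{\,}\cdots\Bigr),
\]
or, more simply, by the Mastrostefano variant. There we restrict $\mathcal H$ to a well-chosen subset of size $\approx k$ and use the upper bound $\sum_{i<j}\sum_n \mathbf 1_{\mathbb P}(n+h_i)\mathbf 1_{\mathbb P}(n+h_j)w_n\ll (\log k/k)^2 k^2\cdots$, together with a pigeonhole over subsets of size where the expected number crosses $m$.

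To turn weights into counts we use Cauchy--Schwarz: $\#\{n:\text{good}\}\ge (\sum w_n\mathbf 1_{\text{good}})^2/\sum w_n^2$. This step is where the explicit losses appear. The bound $\sum w_n^2\ll S_1\cdot \max w_n$ with $\max w_n\ll (\log R)^{2k}\exp(O(k^4\log k))$-type constants produces the factor $e^{-3k^4\log k}$ and the factor $k^{-2}(\log k)^{-1}$. The density of admissible $n$ modulo $W$, combined with the probability that the $k$ forms avoid small primes, yields $(f(N)/\log N)^k$. Hypothesis \eqref{21.5} guarantees that $W$ and the covering modulus stay below $N^{o(1)}$, so that Bombieri--Vinogradov still applies. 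Finally we set $N=g^{-1}(x)$ and check that the resulting $n\le 2N\le x$.

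The main obstacle we expect is the explicit bookkeeping of the Maynard sums with uniformity in $k$. One part is controlling the error terms in $S_1$ and $S_2$ when $W$ is as large as allowed by \eqref{21.5}. The other is producing the upper bound for the two-prime sums with an explicit constant, which is needed to choose $245$ in $k=e^{245m}$. For the latter we would use Selberg-sieve upper bounds for pairs, following the computation in \cite{Mas1}.
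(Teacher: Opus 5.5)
Your outline has genuine gaps. The two devices that make the paper's proof work are replaced by steps that would fail.

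\emph{Excluding the other primes.} You restrict to one class $n\equiv\nu_0 \pmod W$, $W=\prod_{p\le D_0}p$, so that every $n+h$ with $h\notin\mathcal{H}$ gets a small prime factor. But $f$ may be of size $\asymp\log x$ (Corollary 1.2). For fixed $D_0$, a proportion $\varphi(W)/W$ of any long interval escapes such a covering, far more than the $k$ points of $\mathcal{H}$. So $W$ must tend to infinity, and all your $n\le 2N$ lie in one class modulo $W$. Hence there are at most $2N/W+1=o(x)$ of them, which is incompatible with Corollary 1.2.

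The hypothesis $f(x)\le C\log x/(k^4\log^2k)$ is not about keeping a covering modulus small; the paper uses no covering at all. It takes all $x<n\le 2x$, the forms $gn+h$ with $g\in[6f(x),8f(x)]$, and builds into $S(\mathcal{H},n)$ the penalty $-k\sum_{h\le 5f(x),\,(h,g)=1,\,h\notin\mathcal{H}}\mathbf{1}_{S(\rho,B)}(gn+h)$. This is controlled by the upper-bound sieve estimate for $\mathbf{1}_{S(\rho,B)}(gn+h)w_n$ together with $\sum_h\Delta_{\mathcal{L}}/\varphi(\Delta_{\mathcal{L}})\ll(\log k)f(x)$. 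The resulting error $O(k^4\log^2k\,\frac{f(x)}{\log x}I_k)$ must lose to $\frac{\log k}{240}I_k$, and that is exactly where the hypothesis enters.

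Likewise, $\max w_n\ll(\log R)^{2k}e^{O(k^4\log k)}$ is not available for all $n$; the available bound is $w_n\ll(\log R)^{2k}\prod_i\prod_{p\mid gn+h_i,\,p\nmid B}4$. The paper makes it usable by the term $-k\sum_i\sum_{p\mid gn+h_i,\,p\le x^{\rho},\,p\nmid B}1$ with $\rho=k^{-3}(\log k)^{-1}$. This gives $4^{2k/\rho}=e^{4\log 2\,k^4\log k}$, the origin of $e^{-3k^4\log k}$.

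Finally, a single tuple of multiples of $\prod_{p\le k}p$ needs $f\gg k\prod_{p\le k}p$, which is not assumed (e.g.\ $f\equiv 8e^{\gamma}k\log k$). One tuple also yields only $x(\log x)^{-k}$ up to $k$-dependent factors. The factor $f(x)^k$ comes from summing over the $\gg e^{-k^2}f(x)^k$ admissible tuples in $(0,f(x))$ with gaps $>f(x)/C_0$, each $n$ contributing to at most one of them.

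\emph{Getting exactly $m$.} The sieve only certifies at least $m+1$ primes among the shifts. Nothing in your outline controls the upper tail: first-moment lower bounds and pair-correlation upper bounds of the available strength are consistent with the count never equalling $m$. Picking a subset of $\mathcal{H}$ carrying exactly $m$ primes is useless, because $[n,n+f(n)]$ still contains all the prime $n+h_i$. Moreover, your $n$ range over $[g^{-1}(x),2g^{-1}(x)]$, only $\asymp x/f$ integers. So for $f\asymp\log x$ no bookkeeping reaches the claimed bound unless each good configuration yields many good left endpoints.

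The missing idea is a discrete intermediate-value argument in the left endpoint. Given $\ge m+1$ primes in $[gn,gn+5f(x)]$, all among the well-spaced $gn+h_i$, set $N_j=gn+j$ for $0\le j\le\lfloor f(gn)\rfloor$.
\begin{itemize}
\item Monotonicity of $f$ and $f(32xf(x))\le 2f(x)$ keep each window $[N_j,N_j+f(N_j)]$ inside $[gn,gn+5f(x)]$.
\item The prime count of the window is $\ge m+1$ at $j=h_1$ and is $0$ at $j=\lfloor f(gn)\rfloor$.
\item The count drops by at most one per step, and only when $N_j$ is prime.
\item Just after the last drop it equals $m$. Since the next $h_i$ is more than $f(x)/C_0$ away, it stays $m$ for $\lfloor f(x)/C_0\rfloor$ consecutive $N$.
\end{itemize}
This step produces the factor $1/(k\log k)$ and the extra factor $f(x)$ that moves the range to $N\le g(x)=20xf(x)$. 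That is why $g^{-1}$ appears in the final bound.
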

As another example besides Mastrostefano's case $f(x)=\lambda \log x$, $f(x)=\log \log x$ also satisfies the conditions of Theorem 1.1. As a corollary to the above theorem, we obtain the following result, which is an explicit version of Mastrostefano's theorem in \cite{Mas2}.
\begin{cor}
Let $m$ be a sufficiently large positive integer and set $k=\lfloor \exp (245m ) \rfloor$. Then, for any constant $\lambda$ with $0<\lambda \ll k^{-4}\log ^{-2}k$ and any sufficiently large $x$, we have
\[
\# \{n\leq x \; | \; \#([n, n+\lambda \log n ] \cap \mathbb{P})=m \}\gg x \frac{\lambda ^{k}}{k^{2}\log k}e^{-3k^{4}\log k}.
\]
The implied constant is independent of $m$ and $x$. 
\end{cor}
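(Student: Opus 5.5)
The plan is to deduce Corollary 1.2 directly from Theorem 1.1 with $f(x)=\lambda\log x$. Three things need checking: the hypotheses of Theorem 1.1 for this $f$, the size of $g^{-1}(x)$, and the simplification of the factor $\bigl(f(g^{-1}(x))/\log g^{-1}(x)\bigr)^{k}$ to $\lambda^{k}$.

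\textbf{Hypotheses.} Clearly $f$ is non-decreasing.

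\begin{itemize}
\item The lower bound $f(x)\ge 8e^{\gamma}k\log k$ holds once $x\ge \exp(8e^{\gamma}k\log k/\lambda)$, which is covered by taking $x$ sufficiently large in terms of $m$ and $\lambda$. Strictly, $f(n)$ may be small for small $n$. This is harmless: only $n$ of size comparable to $g^{-1}(x)$ or larger should matter in the proof of the theorem, and in any case we may replace $f$ by $\max(\lambda\log x,\,8e^{\gamma}k\log k)$, which agrees with $\lambda\log x$ for large arguments. Changing $n$ in an initial segment of bounded size affects the count only negligibly.
\item Condition (\ref{21.5}) is exactly the assumption $\lambda\ll k^{-4}\log^{-2}k$, with $C$ the implied constant.
\item For (\ref{28}), $f(32xf(x))=\lambda\bigl(\log x+\log(32\lambda\log x)\bigr)$. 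This is $\le 2\lambda\log x$ as soon as $32\lambda\log x\le x$, which holds for large $x$.
\end{itemize}

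\textbf{Size of $g^{-1}(x)$.} Here $g(y)=20\lambda y\log y$ is increasing, so $y=g^{-1}(x)$ satisfies $y\log y=x/(20\lambda)$. Hence
\[
\log y=\log x-\log\log y+O_\lambda(1)=(1+o(1))\log x .
\]
Then
\[
\frac{f(g^{-1}(x))}{\log g^{-1}(x)}=\frac{\lambda\log y}{\log y}=\lambda
\]
exactly, so the ratio raised to the $k$-th power is precisely $\lambda^{k}$. Note that the growth of $g^{-1}(x)$ does not even enter this factor.

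\textbf{Conclusion.} Substituting into (\ref{MT}) gives
\[
\#\{n\le x\;|\;\#([n,n+\lambda\log n]\cap\mathbb{P})=m\}\gg \frac{x}{k^{2}\log k}\,e^{-3k^{4}\log k}\,\lambda^{k},
\]
with the implied constant independent of $m$ and $x$, as Theorem 1.1 asserts.

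The only delicate point is the first bullet. Theorem 1.1 requires $f(x)\ge 8e^{\gamma}k\log k$ for all $x$, while $\lambda\log x$ fails this at small $x$. I would handle it with the truncated $f$ described above. The truncation must still satisfy (\ref{28}), and it does: on the constant part we have $f(32xf(x))\le\max(\text{const},\lambda\log(\dots))\le 2f(x)$ by the same computation.
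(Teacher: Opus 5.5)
Your proposal is correct and follows the paper's own proof. Both apply Theorem 1.1 with $f(x)=\lambda\log x$, check the conditions $f(x)\le \frac{C}{k^{4}\log^{2}k}\log x$ and $f(32xf(x))\le 2f(x)$, and note that $f(g^{-1}(x))/\log g^{-1}(x)=\lambda$ exactly. Your truncation to $\max(\lambda\log x,\,8e^{\gamma}k\log k)$ covers the hypothesis $f\ge 8e^{\gamma}k\log k$ at small arguments, which the paper passes over silently; it is harmless, since the integers counted in the proof of Theorem 1.1 all exceed $6xf(x)$.
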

The function $g^{-1}(x)$ is somewhat difficult to handle. The following estimate can be used as a simplified version. 
\begin{cor}
Let $m$ be a sufficiently large positive integer and put $k=\lfloor \exp (245m ) \rfloor$. Suppose that $f$ is a differentiable function that satisfies the conditions of Theorem 1.1 and $\frac{f^{\prime}(x)}{f(x)}\leq \frac{1}{x\log x}$. Then, for any sufficiently large $x$, we have 
\[
\# \{n \leq x \; | \; \#([n, n+f(n)] \cap \mathbb{P})=m \}\gg \frac{xe^{-3k^{4}\log k}}{k^{2}\log k}\left( \frac{f(x)}{\log x} \right)^{k}.
\]
The implied constant is independent of $x$ and $m$. In particular, if $f$ satisfies these conditions, then there exist infinitely many positive integers $n$ that satisfy $\# ([n, n+f(n)] \cap \mathbb{P})=m$. 
\end{cor}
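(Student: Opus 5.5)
We deduce Corollary 1.3 from Theorem 1.1. The only task is to replace $f(g^{-1}(x))/\log g^{-1}(x)$ in \eqref{MT} by $f(x)/\log x$, losing at most a constant factor $c^{k}$. Such a factor can be absorbed: since $k$ is large, $c^{k}\geq e^{-k^{4}\log k\cdot o(1)}$, and the implied constant may be kept independent of $m$ by slightly adjusting the exponential factor. Better still, we aim for a loss of only $2^{-k}$ or $1$.

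First, let $y=g^{-1}(x)$, so that $20yf(y)=x$. The map $g$ is strictly increasing and continuous because $f$ is differentiable and non-decreasing. By \eqref{21.5} we have $f(y)\leq \log y$ for large $y$. Hence $x\leq 20y\log y$, so $y\geq x/(40\log x)$. Also $y\leq x$, since $20f(y)\geq 1$. It follows that $\log y=\log x+O(\log\log x)$, and therefore $\log y\leq \log x$. This gives $1/\log y\geq 1/\log x$, so no loss occurs in the denominator.

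Second, we compare $f(y)$ with $f(x)$ using the hypothesis $f'/f\leq 1/(t\log t)$. Integrating from $y$ to $x$ gives
\[
\log\frac{f(x)}{f(y)}\leq \log\log x-\log\log y=\log\frac{\log x}{\log y}.
\]
Since $\log y\geq \log x-\log(40\log x)$, the right-hand side is $O(\log\log x/\log x)=o(1)$. Hence $f(y)\geq f(x)(1+o(1))$, and so
\[
\Bigl(\frac{f(y)}{\log y}\Bigr)^{k}\geq \Bigl(\frac{f(x)}{\log x}\Bigr)^{k}\Bigl(\frac{\log x}{\log y}\Bigr)^{k-1}\cdot\Bigl(\frac{\log y}{\log x}\Bigr)^{\!-1}\!\cdots
\]
More simply, the inequality above says exactly that $f(y)/\log y\geq f(x)/\log x$: the function $t\mapsto f(t)/\log t$ has logarithmic derivative $f'/f-1/(t\log t)\leq 0$, so it is non-increasing. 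This is the cleanest route. Since $y\leq x$, monotonicity gives $f(y)/\log y\geq f(x)/\log x$ directly, with no loss at all. Substituting into \eqref{MT} yields the stated bound, with the same implied constant.

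The main point requiring care is justifying $g^{-1}(x)\leq x$, including that $g^{-1}$ is well defined. This follows from $f\geq 8e^{\gamma}k\log k\geq 1/20$ and the continuity and monotonicity of $g$. Finally, for the infinitude claim, note that \eqref{21.5} gives $f(x)/\log x\gg (k^{4}\log^{2}k)^{-1}$ is not needed. Instead, the right-hand side is at least $x\,(8e^{\gamma}k\log k/\log x)^{k}$ times constants depending on $m$. This tends to infinity as $x\to\infty$ because $k$ is fixed. Hence the count is unbounded, so infinitely many $n$ satisfy $\#([n,n+f(n)]\cap\mathbb{P})=m$.
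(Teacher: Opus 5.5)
Your final argument is correct and is exactly the paper's proof: $g^{-1}(x)\leq x$ because $g(t)=20tf(t)\geq t$ and $g$ is increasing, and $t\mapsto f(t)/\log t$ is non-increasing because its logarithmic derivative is $f'/f-1/(t\log t)\leq 0$. The earlier detours (the $c^{k}$-absorption remark, integrating from $y$ to $x$, and the unfinished display) are unnecessary and should be deleted; the one piece worth keeping is $g^{-1}(x)\geq x/(40\log x)\to\infty$, which ensures the derivative hypothesis applies on $[g^{-1}(x),x]$ if it is only assumed for large arguments.
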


Finally, in case where the interval lengths are bounded, the following result is obtained. 
\begin{cor}
Let $m$ be a sufficiently large positive integer and put $k=\lfloor \exp (245m ) \rfloor$. Let $C_{k}$ be an arbitrary positive constant such that $C_{k}\geq 8e^{\gamma}k\log k$. Then, for any sufficiently large $x$, we have
\[
\# \{n \leq x \; | \; \#([n, n+C_{k}] \cap \mathbb{P})=m \}\gg \frac{xe^{-3k^{4}\log k}}{k^{2}\log k}\left( \frac{C_{k}}{\log \frac{x}{20C_{k}}} \right)^{k}.
\]
The implied constant is independent of $x$ and $m$. In particular, there exist infinitely many positive integers $n$ that satisfies $\# ([n, n+C_{k}] \cap \mathbb{P})=m$. 
\end{cor}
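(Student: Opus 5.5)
We apply Theorem 1.1 with the constant function $f(x)=C_{k}$ and then make $g^{-1}(x)$ explicit. There are three things to do: check the hypotheses of Theorem 1.1, compute $g^{-1}$, and simplify the resulting lower bound. The "infinitely many" statement then follows at once.

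First we verify the hypotheses. A constant function is non-decreasing. The lower bound $f(x)=C_{k}\geq 8e^{\gamma}k\log k$ is exactly our assumption on $C_{k}$. Condition (\ref{28}) reads $f(32xC_{k})=C_{k}\leq 2C_{k}$, which holds trivially. Condition (\ref{21.5}) asks that $C_{k}\leq \frac{C}{k^{4}\log^{2}k}\log x$. Since $C_{k}$ and $k$ are fixed (they depend only on $m$), this holds once $x$ is sufficiently large in terms of $m$ and $C_{k}$. This is allowed, because the statement only claims the bound for sufficiently large $x$, and the implied constant in Theorem 1.1 does not depend on this threshold.

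Next we compute $g^{-1}$. Here $g(x)=20C_{k}x$ is linear and strictly increasing, so $g^{-1}(x)=\frac{x}{20C_{k}}$. Also $f(g^{-1}(x))=C_{k}$. Substituting into (\ref{MT}) gives
\[
\#\{n\leq x \; | \; \#([n,n+C_{k}]\cap\mathbb{P})=m\}\gg \frac{x}{k^{2}\log k}e^{-3k^{4}\log k}\left(\frac{C_{k}}{\log \frac{x}{20C_{k}}}\right)^{k},
\]
which is exactly the claimed bound. The implied constant is the one from Theorem 1.1, so it is independent of $x$ and $m$.

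Finally, we deduce infinitude. The right-hand side is $x(\log x)^{-k}$ times a positive quantity depending only on $m$ and $C_{k}$, so it tends to infinity as $x\to\infty$. Hence the counting function is unbounded, and infinitely many $n$ satisfy $\#([n,n+C_{k}]\cap\mathbb{P})=m$.

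The only point needing care is condition (\ref{21.5}). For each fixed $C_{k}$ it forces $x$ to be large in terms of $C_{k}$, so the range of validity depends on $C_{k}$, but the uniformity of the implied constant is unaffected. There is no genuine obstacle.
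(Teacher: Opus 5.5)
Your proposal is correct and follows the paper's proof. You specialize Theorem 1.1 to the constant function $f=C_{k}$, observe that $g(x)=20C_{k}x$ so that $g^{-1}(x)=\frac{x}{20C_{k}}$ and $f(g^{-1}(x))=C_{k}$, and substitute into the theorem's lower bound; your checks that $f(x)\leq \frac{C}{k^{4}\log^{2}k}\log x$ holds once $x$ is large in terms of $m$ and $C_{k}$, and that $x\bigl(\log\frac{x}{20C_{k}}\bigr)^{-k}\to\infty$ gives infinitude, fill in details the paper states without comment.
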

\begin{rem}
Maynard \cite{May1} showed that there exist infinitely many positive integers $n$ such that the bounded interval $[n, n+C]$ contains at least $m$ primes if $C \geq m^{3}e^{4m}$. While the value $m^{3}e^{4m}$ is significantly smaller than $8e^{\gamma}k\log k  (\asymp me^{245m})$, it seems difficult to prove the existence of infinitely many intervals of fixed length $\asymp m^{3}e^{4m}$ containing **exactly** $m$ primes, based solely on Maynard's argument.
\end{rem}

%%%%%%%%%%%%%%%%%%%%%%%%%%%%%%%%%%%%%%%%%%%%%%%%%%%%%%%%%%%%%%%%%%%%%%%%%%%%%%%%%%%%%%%%%%%%%%%%%%%%%%%%%%%%
%%%%%%%%%%%%%%%%%%%%%%%%%%%%%%%%%%%%%%%%%%%%%%%%%%%%%%%%%%%%%%%%%%%%%%%%%%%%%%%%%%%%%%%%%%%%%%%%%%%%%%%%%%%%

\section{Application of Maynard's sieve for linear forms}
Let $\mathbb{P}$ denote the set of all primes. For any set $S\subset \mathbb{N}$, denote its characteristic function by $\mathbf{1}_{S}$.  For $k$ distinct integers $h_{i}$ ($i=1, \ldots ,k$), we say that the set ${\cal H}=\{h_{1}, \ldots ,h_{k} \}$ is admissible if the set of residue classes $\{h_{i}\; (\textrm{mod}\; p) \; | \; i=1, \ldots ,k \}$ is a proper subset of $\mathbb{Z}/p\mathbb{Z}$ for any $p\in \mathbb{P}$. More generally, the set ${\cal L}=\{L_{1}, \ldots ,L_{k}\}$ consisting of $k$ distinct linear forms
\[
L_{i}(n)=a_{i}n+b_{i} \quad (a_{i}, b_{i}\in \mathbb{Z})
\]
$(i=1, \ldots ,k)$ is called admissible if for any $p\in \mathbb{P}$, there exists some integer $n=n_{p}\in \mathbb{Z}$ such that
\[
L_{1}(n)\cdots L_{k}(n)\not \equiv 0 \quad (\mathrm{mod}\; p).
\]
In particular, if these linear forms are of the form $L_{i}(n)=gn+h_{i}$ ($g\in \mathbb{N}$, $h_{1}, \ldots ,h_{k}\in \mathbb{Z}$) for $i=1, \ldots ,k$, then it follows that ${\cal L}=\{L_{1}, \ldots ,L_{k}\}$ is admissible if and only if ${\cal H}=\{h_{1}, \ldots ,h_{k} \}$ is so. Let $\varphi$ denote Euler's totient function. The following proposition is an adaptation of Maynard's sieve (\cite{May2}, Proposition 6.1) to our situation.

\begin{prop}
Let $x$ be a sufficiently large real number and $k$ a sufficiently large positive integer with $k\leq (\log x)^{\frac{1}{5}}$. Suppose that ${\cal H}=\{h_{1}, \ldots ,h_{k}\}$ is an admissible set with $0<h_{1}<\cdots <h_{k}<f(x)$, where $f(x)$ is a non-decreasing function with $f(x)\geq 1$, $f(x)\ll _{\varepsilon} x^{\varepsilon}$ as $x\to \infty$, and put ${\cal L}=\{gn+h_{1}, \ldots ,gn+h_{k}\}$. Put $\eta =\frac{c}{500k^{2}}$, where $c>0$ is a fixed constant. Put $R:=x^{\frac{1}{24}}$, and suppose $k^{-4} \leq \rho \leq \frac{1}{80}$. Let $B$ be an integer which is either $1$ or a prime number satisfying $\log \log x^{\eta}\ll B \ll x^{2\eta}$. Define a subset of positive integers $S(\rho ,B)$ by 
\[
S(\rho ,B)=\{n \in \mathbb{N}\; | \; p|n \Rightarrow (p>x^{\rho} \; \mathrm{or} \; p|B) \}.
\]
Finally, let $g$ be a square-free positive integer in $[6f(x), 8f(x)]$ which is coprime to $B$. Then, there exists a non-negative weight $w_{n}=w_{n}({\cal H})$ that satisfies 
\begin{equation}
\label{2}
w_{n}\ll (\log R)^{2k}\prod _{i=1}^{k}\underset{p\nmid B}{\prod _{p|gn +h_{i}}}4
\end{equation}
and the following estimates hold. 
\begin{equation}
\label{3}
\sum _{x<n \leq 2x}w_{n}=\left(1+O\left( \frac{1}{(\log x)^{\frac{1}{10}}} \right) \right)\frac{B^{k}}{\varphi (B)^{k}}\mathfrak{S}_{B}({\cal H})x(\log R)^{k}I_{k},
\end{equation}
\begin{equation}
\label{4}
\begin{aligned}
\sum _{x<n \leq 2x}\mathbf{1}_{\mathbb{P}}(gn+h_{i})w_{n}&\geq \left(1+O\left( \frac{1}{(\log x)^{\frac{1}{10}}} \right) \right)\frac{B^{k-1}}{\varphi (B)^{k-1}}\mathfrak{S}_{B}({\cal H})  \frac{\varphi (g)}{g}(\log R)^{k+1}J_{k}\sum _{x<n \leq 2x}\mathbf{1}_{\mathbb{P}}(gn+h_{i}) \\
& \quad +O\left(\frac{B^{k}}{\varphi (B)^{k}}\mathfrak{S}_{B}({\cal H})x(\log R)^{k-1}I_{k} \right), 
\end{aligned}
\end{equation}
$(i=1, \ldots ,k)$, and 
\begin{equation}
\label{5}
\begin{aligned}
\sum _{x<n \leq 2x}\mathbf{1}_{S(\rho ,B)}(gn+h)w_{n}& \ll \rho ^{-1}\frac{\Delta _{\cal L}}{\varphi (\Delta _{\cal L})}\frac{B^{k+1}}{\varphi (B)^{k+1}}\mathfrak{S}_{B}({\cal H})x(\log R)^{k-1}I_{k}
\end{aligned}
\end{equation}
for any integer $h\in [1, 5f(x)]\backslash {\cal H}$, where
\[
\Delta _{\cal L}:=g^{k+1}\prod _{i=1}^{k}|h-h_{i}| \neq 0.
\]
Finally, 
\begin{equation}
\label{6}
\sum _{x<n \leq 2x} \left(\sum _{\substack{p|gn+h _{i} \\ p<x^{\rho}, p\nmid B}} 1 \right)w_{n}\ll \rho ^{2}k^{4}(\log k)^{2}\frac{B^{k}}{\varphi (B)^{k}}\mathfrak{S}_{B}({\cal H})x(\log R)^{k}I_{k}
\end{equation}
for $i=1, \ldots ,k$. Here, $I_{k}$ and $J_{k}$ are some quantities depending only on $k$, $\mathfrak{S}_{B}({\cal H})=\mathfrak{S}_{g,B}({\cal H})$ is a quantity that depends only on $g$, $B$ and ${\cal H}$ satisfying 
\begin{equation}
\label{7}
\mathfrak{S}_{B}({\cal H})\gg \frac{1}{\exp (O(k))},
\end{equation}
and
\begin{equation}
\label{9}
I_{k}\gg (2k\log k)^{-k}, \quad J_{k}\gg \frac{\log k}{k}I_{k}.
\end{equation}
\end{prop}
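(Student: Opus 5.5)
This proposition will be obtained by specializing Maynard's Proposition 6.1 of \cite{May2}. We take the linear forms $\mathcal{L}=\{gn+h_1,\dots,gn+h_k\}$, the weights $w_n=\bigl(\sum_{d_i\mid gn+h_i,\ (d_i,B)=1}\lambda_{d_1,\dots,d_k}\bigr)^2$, and Maynard's smooth choice of $\lambda$ built from $F(t_1,\dots,t_k)=\prod_i g(kt_i)$ supported on $\sum t_i\le 1$, as in his bounded-gaps-with-$k\asymp$ large analysis.

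The first step is to check that the hypotheses of \cite{May2} hold uniformly. The sieve level is $R=x^{1/24}$, well inside the Bombieri--Vinogradov range $x^{1/4-\epsilon}$ for moduli $gq$ with $g\le 8f(x)\ll x^{\epsilon}$. We also need $k\le(\log x)^{1/5}$, admissibility of $\mathcal{L}$, which follows from that of $\mathcal{H}$, and coefficients bounded by $x^{O(1)}$. Since $g$ is coprime to $B$ and squarefree, the exceptional modulus $B$ (a possible Siegel zero prime, or $1$) is handled exactly as in Maynard, by excluding it from the sieve moduli. This yields the singular series $\mathfrak{S}_B(\mathcal{H})=\prod_{p\nmid B}(1-\#\{h_i \bmod p\}/p)(1-1/p)^{-k}$ (with the $p\mid g$ factors adjusted), and the bound \eqref{7} follows since the product over $p\le k^2$ is $\ge\exp(-O(k))$ and the tail is $1+O(k^2/p^2)$.

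The pointwise bound \eqref{2} follows from $|\lambda_{\mathbf d}|\ll(\log R)^k$ and the divisor count $\bigl(\sum_{d\mid gn+h_i}1\bigr)^2\le\prod_{p\mid\cdot}4$. The asymptotics \eqref{3} and \eqref{4} are Maynard's Lemmas 8.1--8.4 with $I_k=\int F^2$ and $J_k=\int(\int F\,dt_k)^2$. The factor $\varphi(g)/g$ in \eqref{4} appears because the primes counted satisfy $gn+h_i\equiv h_i\pmod g$. The lower bounds \eqref{9} come from Maynard's choice $g(t)=1/(1+Akt)$ with $A=\log k-2\log\log k$, which gives $J_k/I_k\gg\log k/k$; a direct calculation of $\int F^2$ over the simplex gives $I_k\gg(2k\log k)^{-k}$.

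For \eqref{5} we apply Maynard's upper-bound sieve: $n\mapsto gn+h$ is added as a $(k+1)$-st form, and an upper bound sieve of level $x^{\rho}$ detects $S(\rho,B)$. This gives a factor $\rho^{-1}(\log R)^{-1}$ times the main term, with a local correction $\Delta_{\mathcal L}/\varphi(\Delta_{\mathcal L})$ accounting for primes dividing $g$ or some $h-h_i$. The additional factor $B/\varphi(B)$ comes from primes dividing $B$ being allowed.

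For \eqref{6} we swap the sums to get $\sum_{p<x^\rho,\,p\nmid B}\sum_{n:\,p\mid gn+h_i}w_n$. When $p\nmid d_i$, forcing $p\mid gn+h_i$ alters the $i$-th variable, and Maynard's Lemma 8.5 computation gives $\ll(\log p/\log R)^2k^4(\log k)^2$ times the main term. The $k^4(\log k)^2$ comes from $\sup|\partial_i F|^2/\int F^2$ for the chosen $F$. Summing $\sum_{p<x^\rho}(\log p)^2/(p\log^2R)\ll\rho^2$ then gives \eqref{6}.

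The main obstacle is \eqref{6}, together with keeping every implied constant independent of $k$. This requires redoing Maynard's derivative estimates explicitly for the product-form $F$ rather than quoting \cite{May2}. I would first try bounding $\partial_i F/F\le Ak$ pointwise, which should give $k^2\log^2k$ per derivative squared, and absorb the remaining factor of $k^2$ into the support constraint.
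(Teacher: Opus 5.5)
Your top-level plan is exactly what the paper does. You specialize Proposition~6.1 of \cite{May2} to $\mathcal{A}=\mathbb{N}$, $\mathcal{P}=\mathbb{P}$ and $L_i(n)=gn+h_i$, so every leading coefficient is $g$, which is where $\varphi(g)/g$ in \eqref{4} comes from. The paper gives no argument beyond that citation; Section~4 only makes the constant in $J_k\gg\frac{\log k}{k}I_k$ explicit.

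\textbf{The gap.} The weight you then commit to is not the weight of that proposition. In \cite{May2} the sieve function is the smooth $F=\psi(\sum_i t_i)\prod_i\psi(\sqrt{k}\,t_i)/(1+t_ik\log k)$ written out in Section~4. You instead use the choice from Maynard's \emph{Small gaps} paper, $\prod_i g(kt_i)$ with $g(t)=1/(1+At)$, which is cut off sharply at $t_i=T/k$ and on $\sum_i t_i=1$. Maynard's estimates, uniform for $k\le(\log x)^{1/5}$, are proved for his smooth $F$, so you cannot quote them for yours.

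Your plan to redo \eqref{6} via a pointwise bound on $\partial_iF/F$ also breaks down for a discontinuous $F$. After Selberg diagonalization, the main term of $\sum_{p<x^{\rho}}\sum_{n:\,p\mid gn+h_i}w_n$ is essentially $\sum_p\frac1p(\log R)^k\int\bigl(F(t)-F(t+\delta_pe_i)\bigr)^2dt$, where $\delta_p=\log p/\log R$ and $e_i$ is the $i$-th unit vector. A jump of $F$ across $t_i=T/k$, or across the face $\sum_jt_j=1$, makes this integral linear in $\delta_p$ rather than quadratic. You then get $\rho$ times a boundary mass, not $\rho^2$.

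This is probably salvageable: the boundary value $g(T)^2=e^{-2A}=(\log k)^4/k^2$ is small and $\rho\ge k^{-4}$. But that argument is not in your plan, and neither is a $k$-uniform sum-to-integral conversion for sharply cut-off weights. Also, the quantity controlling \eqref{6} is $\int(\partial_iF)^2/\int F^2$, not $\sup|\partial_iF|^2/\int F^2$. A bound better than $k^4(\log k)^2$ needs no ``absorbing''.

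\textbf{The fix.} Keep Maynard's smooth $F$ and use his Proposition~6.1 as a black box, once its distribution hypothesis is checked for $\mathbb{N}$ and the primes. What matters there is not the level $R=x^{1/24}$ but a power-of-$\log x$ saving that grows with $k$. That is why the exceptional modulus $B$ and $\eta=c/(500k^2)$ appear. With that check done, \eqref{2}--\eqref{9} are literally his conclusions. Your $F$ would give a better ratio, $J_k/I_k\sim\frac{\log k}{k}$ rather than the $\sim\frac{\log k}{4k}$ of Section~4, but only at the price of redoing Maynard's whole analysis.

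\textbf{A separate slip.} Admissibility of $\mathcal{L}=\{gn+h_i\}$ does not follow from that of $\mathcal{H}$. If a prime $p\mid g$ divides some $h_i$, then $p\mid gn+h_i$ for every $n$, and the $p$-factor of the singular series vanishes, contradicting \eqref{7}. You need $(h_i,g)=1$, which is automatic if, say, $g$ is prime, since $0<h_i<g$. The paper makes the same slip.
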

(For detailed definitions of the notation, see Proposition 6.1 of \cite{May2}.) 
\begin{rem}
The implied constant in (\ref{9}) can be determined specifically. At the last of this paper,  we will see that one can choose $w_{n}$ so that 
\begin{equation}
\label{IJ}
J_{k}\geq \frac{1}{5}\frac{\log k}{k}I_{k}
\end{equation}
holds for any sufficiently large $k$. 
\end{rem}
Following Mastrostefano \cite{Mas2}, consider the sum 
\begin{equation}
\label{10}
S=\sum _{\cal H}^{\quad \quad *}\sum _{x<n \leq 2x}S({\cal H}, n),
\end{equation}
where
\begin{equation}
\label{11}
\begin{aligned}
S({\cal H}, n)&:=\left( \sum _{i=1}^{k}\mathbf{1}_{\mathbb{P}}(gn+h_{i})-m-k \sum _{i=1}^{k}\sum _{\substack{p|gn+h_{i} \\ p\leq x^{\rho}, p\nmid B}}1-k\sum _{\substack{h\leq 5f(x) \\ (h,g)=1, h \not \in {\cal H}}}\mathbf{1}_{S(\rho ,B)}(gn+h) \right)w_{n}({\cal H}),
\end{aligned}
\end{equation}
for $g$ in the Proposition 2.1 and $\sum _{\cal H}^{*}$ denotes the sum over all admissible sets ${\cal H}=\{h_{1}, \ldots ,h_{k}\}$ with $k$ elements satisfying 
\[
0<h_{1}<h_{2}<\cdots <h_{k}<f(x),
\]
\[
|h_{i}-h_{j}| > \frac{f(x)}{C_{0}}
\]
for any $1\leq i \neq j \leq k$, where $k=O(1)$ and $C_{0}=C_{0}(k)$ is a quantity which depends only on $k$. (We will determine this specifically later.) The weight $w_{n}({\cal H})$ is that of the above proposition. Since the terms with $S({\cal H}, n)>0$ only give positive contributions to $S$, and for such $n$ and ${\cal H}$, 
\begin{equation}
\label{12}
k\sum _{i=1}^{k}\sum _{\substack{p|gn+h_{i} \\ p\leq x^{\rho}, p\nmid B}}1=0
\end{equation}
must hold. Hence in (\ref{2}), the conditions $p | gn+h_{i}$ and $p\nmid B$ imply $p>x^{\rho}$. Therefore, by Proposition 2.1, it follows that 
\[
w_{n}\ll (\log R)^{2k}\prod _{i=1}^{k}\prod _{\substack{p|gn+h_{i} \\ p>x^{\rho}, p\nmid B}}4.
\]
Since $g\in [6f(x), 8f(x)]$ and $0<h_{i}<f(x)$, we have $gn+h_{i}\leq 17xf(x)$. Put
\[
n_{i}:=\# \{p\in \mathbb{P} \; | \; p | gn+h_{i}, \; p>x^{\rho} \}.
\]
Then 
\[
(x^{\rho})^{n_{i}}\leq gn+h_{i}\leq 17xf(x).
\]
Combining this with $f(x)\ll x^{\varepsilon}$, it follows that $n_{i}\leq \frac{2}{\rho}$ for $i=1, \ldots ,k$. Hence
\begin{align*}
w_{n}&\ll (\log R)^{2k}\prod _{i=1}^{k}4^{\frac{2}{\rho}} \ll (\log x)^{2k}\exp \left( \frac{4k}{\rho} \log 2 \right).
\end{align*}
Therefore, 
\begin{equation}
\label{13}
S\ll k (\log x)^{2k}\exp \left( \frac{4k\log 2}{\rho} \right)\sum _{{\cal H}}^{\quad \quad *}\sum _{x<n \leq 2x}\mathbf{1}_{S({\cal H}, n)>0}.
\end{equation}
Furthermore, in order that $S({\cal H},n)>0$ holds,   in addition to the condition (\ref{12}),  the following must also be satisfied. 
\begin{equation}
\label{14}
\sum _{i=1}^{k}\mathbf{1}_{\mathbb{P}}(gn+h_{i})-m>0,
\end{equation}
and
\begin{equation}
\label{15}
k \sum _{\substack{h \leq 5f(x) \\ (h,g)=1, h \not \in {\cal H}}}\mathbf{1}_{S(\rho ,B)}(gn+h)=0.
\end{equation}

Note that condition (\ref{12}) implies that all of $gn+h_{1}, \ldots ,gn+h_{k}$ are divisible only by a prime $p$ with $p>x^{\rho}$ or $p|B$. 

Condition (\ref{14}) ensures that at least $m+1$ of $gn+h_{1}, \ldots ,gn+h_{k}$ are primes. 

Finally, condition (\ref{15}) implies that for any integers $1\leq h \leq 5f(x)$ with $h\not \in {\cal H}$, $gn+h$ is divisible by some prime $p\leq x^{\rho}$. (In fact, this statement directly follows from the definition if $(h,g)=1$. If $(h,g)>1$, then $gn+h$ is divisible by a common prime factor of $g$ and $h$, since $g,h \ll f(x)\ll x^{\varepsilon}<x^{\rho}$.) In particular, all $gn+h$ with $h\in [1, 5f(x)]$, $h\not \in {\cal H}$ are not primes. 

Therefore, all primes in $[gn, gn+5f(x)]$ are contained in the set $\{gn+h_{1}, \ldots ,gn+h_{k}\}$ for some admissible set ${\cal H}=\{h_{1}, \cdots ,h_{k}\}$, and for any fixed $n$, there are no distinct admissible sets that give positive contributions to $S$.  Consequently,
\begin{equation}
\label{16}
S\ll k (\log x)^{2k}\exp \left( \frac{4k\log 2}{\rho} \right) |I(x)|.
\end{equation}
Here, the set  $I(x)$ consists of the intervals  of the form $[gn, gn+5f(x)]$, $x<n\leq 2x$ which have the property that there exists a unique admissible set ${\cal H}=\{h_{1}, \ldots ,h_{k}\}$ with $0<h_{1}<\ldots <h_{k}<f(x)$, $|h_{i}-h_{j}|> \frac{f(x)}{C_{0}}$ ($i\neq j$) such that 
\[
|[gn, gn+5f(x)]\cap \mathbb{P}|=|\{gn+h_{1}, \ldots ,gn+h_{k} \}\cap \mathbb{P}| \geq m+1
\]
holds. Moreover, these intervals in $I(x)$ are disjoint, since such intervals are expressed by  $[gn, gn+5f(x)]$, $[gm, gm+5f(x)]$ ($n<m$) and 
\[
gn+5f(x)<g(n+1)\leq gm,
\]
since $g\geq 6f(x)$. 

Next, we give a lower bound for $S$. By substituting (\ref{3})-(\ref{6}) into (\ref{11}) and combining (\ref{10}), we have 
\begin{equation}
\label{17}
\begin{aligned}
S&\geq \left(1+O\left( \frac{1}{(\log x)^{\frac{1}{10}}} \right) \right)\sum _{{\cal H}}^{\quad \quad *}\Bigg[\frac{B^{k-1}}{\varphi (B)^{k-1}}\mathfrak{S}_{B}({\cal H})\frac{\varphi (g)}{g}(\log R)^{k+1}J_{k}\sum _{x<n\leq 2x}\sum _{i=1}^{k}\mathbf{1}_{\mathbb{P}}(gn+h_{i})  \\
& \quad \quad \quad \quad +O\left( \frac{kB^{k}}{\varphi (B)^{k}}\mathfrak{S}_{B}({\cal H})x(\log R)^{k-1}I_{k}\right)  -m\frac{B^{k}}{\varphi (B)^{k}}\mathfrak{S}_{B}({\cal H})x(\log R)^{k}I_{k} \\
& \quad \quad \quad \quad +O\left( \rho ^{2}k^{6}(\log k)^{2}\frac{B^{k}}{\varphi (B)^{k}}\mathfrak{S}_{B}({\cal H})x(\log R)^{k}I_{k} \right) \\
& \quad \quad \quad \quad  +O\left( k\rho ^{-1}\frac{B^{k+1}}{\varphi (B)^{k+1}} \mathfrak{S}_{B}({\cal H})x(\log R)^{k-1}I_{k} \sum _{\substack{h\leq 5f(x) \\ (h,g)=1, h\not \in {\cal H}}} \frac{\Delta _{{\cal L}}}{\varphi (\Delta _{ {\cal L}})}   \right) \Bigg].
\end{aligned}
\end{equation}
By (2.7) of \cite{Mas1}, we have
\begin{equation}
\label{18}
\frac{\varphi (B)}{B}\frac{\varphi (g)}{g}\sum _{i=1}^{k}\sum _{x<n \leq 2x}\mathbf{1}_{\mathbb{P}}(gn+h_{i})>\frac{kx}{2\log x},
\end{equation}
and by replacing $\lambda \log x$ with $f(x)$ in the proof of Lemma 3.1 of the same paper, by almost the same argument it follows that
\begin{equation}
\label{19}
 \sum _{\substack{h\leq 5f(x) \\ (h,g)=1, h\not \in {\cal H}}} \frac{\Delta _{{\cal L}}}{\varphi (\Delta _{ {\cal L}})}  \ll (\log k)f(x)
\end{equation}
for $g\in [6f(x), 8f(x)]$. By substituting (\ref{18}), (\ref{19}) into (\ref{17}), we obtain 
\begin{align*}
S&\gg \sum _{{\cal H}}^{\quad \quad *}\frac{B^{k}}{\varphi (B)^{k}}\mathfrak{S}_{B}({\cal H})x(\log R)^{k} \\
& \quad \times \left[ kJ_{k}\frac{\log R}{2\log x}+O\left( \frac{kI_{k}}{\log R} \right)-mI_{k}  +O(\rho ^{2}k^{6}(\log k)^{2}I_{k})+O\left( \frac{k\log k}{\rho}\frac{B}{\varphi (B)}\frac{f(x)}{\log R}I_{k} \right) \right].
\end{align*}
Set
\begin{equation}
\label{20}
\rho =k^{-3}(\log k)^{-1}.
\end{equation}
Substituting this and $R=x^{\frac{1}{24}}$, and using (\ref{IJ}), we obtain 
\begin{equation}
\label{21}
\begin{aligned}
S&\gg \sum _{{\cal H}}^{\quad \quad *}\frac{B^{k}}{\varphi (B)^{k}}\mathfrak{S}_{B}({\cal H})x(\log R)^{k}  \left[ \frac{I_{k}\log k}{240} +O\left( \frac{kI_{k}}{\log x} \right)-mI_{k}  +O(I_{k})+O\left( k^{4}\log ^{2}k\frac{f(x)}{\log x} I_{k} \right) \right].
\end{aligned}
\end{equation}
If $f(x)$ satisfies 
\begin{equation}
\label{21.5}
f(x)\leq \frac{C}{k^{4}\log ^{2}k}\log x
\end{equation}
for an arbitrarily fixed absolute constant $C$ and any $x\geq  x_{0}=x_{0}(k)$, and
\begin{equation}
\label{22}
k=\lfloor \exp (245m ) \rfloor ,
\end{equation}
then for any sufficiently large $m$, the sum of the terms in brackets in (\ref{21}) becomes $\gg I_{k}$ uniformly for $x>10$, and combining this with $\frac{B}{\varphi (B)}\asymp 1$, it follows that
\begin{equation}
\label{23}
S\gg x(\log R)^{k}I_{k}\sum _{{\cal H}}^{\quad \quad *}\mathfrak{S}_{B}({\cal H}).
\end{equation}
Since
\[
I_{k}\gg (2k\log k)^{-k}, \quad \mathfrak{S}_{B}({\cal H})\gg \exp(-O(k))
\]
by (\ref{7}) and (\ref{9}), we obtain 
\begin{equation}
\label{24}
\begin{aligned}
S\gg x(\log x)^{k}e^{-k^{2}}\sum _{\substack{{\cal H}=\{h_{1}, \ldots ,h_{k} \}:\mathrm{admissible} \\ 0<h_{1}<\cdots <h_{k}<f(x) \\ |h_{i}-h_{j}|> \frac{f(x)}{C_{0}} \; (1\leq i\neq j \leq k)}}1.
\end{aligned}
\end{equation}
We evaluate the number of admissible sets in (\ref{24}) by using the method in \cite{Mas2}. First, from the interval $[0, f(x)]$, for each prime number $p$ less than or equal to $k$,  in ascending order, remove all integers with the smallest number of elements in the residue class modulo $p$, and denote the resulted set by ${\cal A}$. By Mertens formula, we have
\[
|{\cal A}|\geq f(x)\prod _{p\leq k}\left(1-\frac{1}{p} \right)\geq  \frac{e^{-\gamma}f(x)}{2\log k}.
\]
The set of $k$ distinct integers $\{h_{1}, \ldots ,h_{k}\}$ arbitrarily chosen from ${\cal A}$ is admissible. Next, we construct a subset $\{h_{1}, \ldots ,h_{k}\}$ that satisfies $|h_{i}-h_{j}|> \frac{f(x)}{C_{0}}$ $(\forall i \neq j$) from ${\cal A}$. Take $h_{1}\in {\cal A}$ arbitrarily. There are $|{\cal A}|$ ways to choose such $h_{1}$. Put ${\cal A}_{1}:={\cal A}$. Next, put
\[
{\cal A}_{2}:={\cal A}_{1}\backslash \left[h_{1}-\left\lfloor \frac{f(x)}{C_{0}} \right\rfloor ,h_{1}+\left\lfloor \frac{f(x)}{C_{0}} \right\rfloor \right]
\]
and choose $h_{2}\in {\cal A}_{2}$ arbitrarily. There are
\[
|{\cal A}_{2}|\geq  |{\cal A}|-2\left\lfloor \frac{f(x)}{C_{0}} \right\rfloor
\]
ways to choose such $h_{2}$. Inductively, for $1\leq i \leq k$, once ${\cal A}_{i-1}$ are defined and $h_{i-1}\in {\cal A}_{i-1}$ are chosen,  define  ${\cal A}_{i}$ by
\[
{\cal A}_{i}:={\cal A}_{i-1}\backslash \left[h_{i-1}-\left\lfloor \frac{f(x)}{C_{0}} \right\rfloor ,h_{i-1}+\left\lfloor \frac{f(x)}{C_{0}} \right\rfloor \right]
\]
and choose $h _{i}\in {\cal A}_{i}$ arbitrarily. There are 
\[
|{\cal A}_{i}| \geq |{\cal A}|-2(i-1)\left\lfloor \frac{f(x)}{C_{0}} \right\rfloor
\]
ways to choose such $h_{i}$. In this way, an admissible set ${\cal H}=\{h_{1}, \ldots ,h_{k}\}$ is constructed, and this satisfies the condition $|h_{i}-h_{j}|> \frac{f(x)}{C_{0}}$ for $i\neq j$, and taking the order $0<h_{1}<h_{2}<\ldots <h_{k}$ into account,  the number of such admissible sets is 
\begin{align*}
\frac{1}{k!}\prod _{i=1}^{k}|{\cal A}_{i}|&\geq \frac{1}{k^{k}}\prod _{i=1}^{k}\left(|{\cal A}|-2(i-1)\left\lfloor \frac{f(x)}{C_{0}} \right\rfloor \right) \\
&\geq \frac{1}{k^{k}}\left(\frac{e^{-\gamma}f(x)}{2\log k}-2(k-1)\left\lfloor \frac{f(x)}{C_{0}} \right\rfloor \right)^{k}.
\end{align*}
We set
\begin{equation}
\label{C_{0}}
C_{0}=8e^{\gamma}k\log k.
\end{equation}
Then
\[
\frac{1}{k!}\prod _{i=1}^{k}|{\cal A}_{i}| \geq \frac{1}{k^{k}}\left( \frac{e^{-\gamma}f(x)}{4\log k} \right)^{k} \geq e^{-k^{2}}f(x)^{k}.
\]
This gives a lower bound for the sum in (\ref{24}). Therefore,
\begin{equation}
\label{25}
S\gg x(\log x)^{k}e^{-2k^{2}}f(x)^{k}.
\end{equation}
Combining this with (\ref{20}) and  (\ref{16}), we have 
\[
k(\log x)^{2k}\exp \left(4(\log 2)k^{4}\log k \right)|I(x)| \gg x(\log x)^{k}e^{-2k^{2}}f(x)^{k}.
\]
Consequently, we have
\begin{equation}
\label{26}
|I(x)|\gg \frac{x}{k}e^{-3k^{4}\log k}\left( \frac{f(x)}{\log x} \right)^{k}.
\end{equation}
Here, the implied constant above is independent of $k, x$.

%%%%%%%%%%%%%%%%%%%%%%%%%%%%%%%%%%%%%%%%%%%%%%%%%%%%%%%%%%%%%%%%%%%%%%%%%%%%%%%%%%%%%%%%%%%%%%%%%%%%%%%%%%%%
%%%%%%%%%%%%%%%%%%%%%%%%%%%%%%%%%%%%%%%%%%%%%%%%%%%%%%%%%%%%%%%%%%%%%%%%%%%%%%%%%%%%%%%%%%%%%%%%%%%%%%%%%%%%

\section{Evaluation of the number of intervals containing a given number of primes}
Let $I$ be an interval which is an element of $I(x)$ in (\ref{16}) and $m, k$ the integers  introduced in the previous section. Then, for some integer $n$ with $x<n\leq 2x$ and a unique admissible set ${\cal H}=\{h_{1}, \ldots ,h_{k} \}$ with $0<h_{1}<\ldots <h_{k}<f(x)$ and $|h_{i}-h_{j}|>\frac{f(x)}{C_{0}}$
($\forall i \neq j$)  ($C_{0}$ is given by (\ref{C_{0}})), we have $I=[gn, gn+5f(x)]$ and 
 \[
 \#([gn, gn+5f(x)] \cap \mathbb{P})=\# ( \{gn+h_{1}, \ldots ,gn+h_{k} \} \cap \mathbb{P})\geq m+1.
 \]
 Put $N_{j}=gn+j$, $I_{j}=[N_{j},N_{j}+f(N_{j})]$ for $j=0, \ldots , \lfloor f(N_{0}) \rfloor$. Then the followings hold. \\
{\bf Fact  1}. $I_{j}\subset I$ for $j=0, \ldots , \lfloor f(N_{0}) \rfloor$. In fact, $N_{j}=gn+j \geq gn$, and  since $f$ is non-decreasing and $f(N_{0})=f(gn)<gn$, 
\begin{align*}
N_{j}+f(N_{j})&=gn+j+f(gn+j) \\
&\leq gn+\lfloor f(N_{0}) \rfloor +f(gn+\lfloor f(N_{0}) \rfloor ) \\ 
&\leq gn+f(gn)+f(2gn) \\
&\leq gn+5f(x)
 \end{align*}
for $g \in [6f(x), 8f(x)]$. Note that the last inequality holds if 
\begin{equation}
\label{27}
f(16xf(x))+f(32xf(x))\leq 5f(x),
\end{equation}
and this inequality holds when $f$ is non-decreasing and satisfies (\ref{28}).   \\
{\bf Fact 2}. If $j=h_{1}$, then 
\begin{equation}
\label{29}
I_{j}\cap \{gn+h_{1}, \ldots ,gn+h_{k} \}=\{gn+h_{1}, \ldots ,gn+h_{k} \}.
\end{equation}
 In fact, 
 \begin{align*}
 I_{h_{1}}=[N_{h_{1}}, N_{h_{1}}+f(N_{h_{1}})]=[gn+h_{1}, gn+h_{1}+f(gn+h_{1})],
 \end{align*}
 and $h_{1}+f(gn+h_{1})\geq h_{k}$,  since the left-hand side is at least $f(6xf(x))$ whereas the right-hand side is less than $f(x)$.  \\
{\bf Fact  3}. If $j=\lfloor f(N_{0}) \rfloor$, then 
 \begin{equation}
 \label{30}
 I_{j}\cap \{gn+h_{1}, \ldots ,gn+h_{k}\} =\emptyset .
 \end{equation}
 In fact, we have
 \[
 I_{\lfloor f(N_{0}) \rfloor}=[N_{\lfloor f(N_{0}) \rfloor}, N_{\lfloor f(N_{0}) \rfloor}+f(\lfloor f(N_{0}) \rfloor ) ] =[gn+\lfloor f(gn) \rfloor , gn+\lfloor f(gn) \rfloor +f(\lfloor f(gn) \rfloor )],
 \]
 and since $f(x)$ is non-decreasing, $f(x)\geq 1$ and $g\geq 6f(x)$, it follows that 
 \[
 \lfloor f(gn) \rfloor \geq \lfloor f(6xf(x)) \rfloor >f(x)>h_{k}.
 \]
 Hence (\ref{30}) holds. \\
{\bf Fact 4}. If 
 \begin{equation}
 \label{31}
 |I_{j}\cap \mathbb{P}|>|I_{j+1}\cap \mathbb{P}|, 
 \end{equation}
 then 
 \begin{equation}
 \label{32}
  |I_{j}\cap \mathbb{P}|=|I_{j+1}\cap \mathbb{P}|+1.
 \end{equation}
 In fact, since
 \[
 I_{j}=[gn+j, gn+j+f(gn+j)], \quad I_{j+1}=[gn+j+1, gn+j+1+f(gn+j+1)],
 \]
 the inequality (\ref{31}) holds only when $gn+j$ is a prime and the semi-open interval $(gn+j+f(gn+j), gn+j+1+f(gn+j+1)]$ does not contain any prime. In this case, $I_{j}$  contains one more prime number than $I_{j+1}$ by $gn+j$. Hence (\ref{32}) holds. \\
 Put 
 \[
 \tilde{j}:=\max \{ 0\leq j \leq \lfloor f(N_{0}) \rfloor \; \; \vline  \; \;   |I_{j}\cap \mathbb{P}| \geq m+1 \}.
 \]
 Then $N_{\tilde{j}}=gn+\tilde{j}$ is a prime and $|I_{\tilde{j}+1}\cap \mathbb{P}|=m$. Now, by this and the properties of ${\cal H}$, the following fact holds. \\
 {\bf Fact 5}. For any integers $1\leq l \leq \left\lfloor \frac{f(x)}{C_{0}} \right\rfloor$,  we have
 \begin{equation}
 \label{jtilde}
| I_{\tilde{j}+l}\cap \mathbb{P}|=m.
 \end{equation}
 In fact, since
 \[
 I_{\tilde{j}+l}=[gn+\tilde{j}+l, gn+\tilde{j}+l+f(gn+\tilde{j}+l)]
 \]
 and $gn+\tilde{j}=N_{\tilde{j}}\in \mathbb{P}$, there exists some integer $1\leq n(\tilde{j})\leq k$ for which 
 \[
 gn+\tilde{j}=gn+h_{n(\tilde{j})}
 \]
 holds. Furthermore, since
 \[
 h_{n(\tilde{j})+1}>h_{n(\tilde{j})}+\left\lfloor \frac{f(x)}{C_{0}} \right\rfloor \geq h_{n(\tilde{j})}+l,
 \]
 we have $gn+\tilde{j}+l<gn+h_{n(\tilde{j})+1}$. Hence the interval $(gn+\tilde{j}, gn+\tilde{j}+l]$ does not contain any prime. In addition, since $|I_{\tilde{j}+l}\cap \mathbb{P}|\leq m$, the interval 
 $(gn+\tilde{j}+f(gn+\tilde{j}), gn+\tilde{j}+l+f(gn+\tilde{j}+l)]$ also does not contain any prime. By the above facts, it follows that
 \[
 ((I_{\tilde{j}}\backslash I_{\tilde{j}+l}) \cup (I_{\tilde{j}+l} \backslash I_{\tilde{j}}))\cap \mathbb{P}=\{gn+\tilde{j} \}.
 \]
 Hence $I_{\tilde{j}}$ contains one more prime than $I_{\tilde{j}+l}$, that is  $gn+\tilde{j}$, and we know that $|I_{\tilde{j}}|=m+1$. So, the statement of Fact 5 holds. \\

 From this observation, it follows that for any $I\in I(x)$, there exist $\left\lfloor \frac{f(x)}{C_{0}} \right\rfloor$ distinct intervals  of the form $[N, N+f(N)]$ ($N\leq gn+\tilde{j}+\lfloor \frac{f(x)}{C_{0}} \rfloor \leq 20xf(x)$) that contain exactly $m$ primes. (Here, we used $gn\leq 16xf(x)$ and $\tilde{j}\leq f(N_{0})=f(gn)\leq f(16xf(x))\leq 2f(x)$.) Therefore, by (\ref{26}) with $C_{0}$ in  (\ref{C_{0}}), we have
 \begin{align*}
 \# \{N\leq 20x f(x) \; | \; \# ([N, N+f(N)] \cap \mathbb{P})=m \}& \gg \left\lfloor \frac{f(x)}{C_{0}} \right\rfloor |I(x)| \\
 &\gg \frac{xf(x)}{k^{2}\log k}e^{-3k^{4}\log k}\left( \frac{f(x)}{\log x} \right)^{k}.
 \end{align*}
 By putting $X=g(x)=20xf(x)$, we obtain 
 \[
 \{N\leq X \; | \; \#([N, N+f(N)]\cap \mathbb{P})=m \}\gg \frac{X}{k^{2}\log k}e^{-3k^{4}\log k}\left( \frac{f(g^{-1}(X))}{\log g^{-1}(X)} \right)^{k}.
 \]
By the preceding arguments,  the above estimation holds with $k$ in (\ref{22}). Thus, the proof of Theorem 1.1 is completed.  \hspace{9.7cm}  $\Box$ \\

 \noindent
 ({\it Proof of Corollary} 1.2) In case that $f(x)=\lambda \log x$ for some $\lambda >0$ (independent of $x$), both (\ref{21.5}) and (\ref{28}) hold if $\lambda \ll k^{-4}\log ^{-2}k$. In this case,
 \[
 \frac{f(g^{-1}(x))}{\log g^{-1}(x)}=\frac{\lambda \log g^{-1}(x)}{\log g^{-1}(x)}=\lambda ,
 \]
 hence for any sufficiently large $m$, we have
 \[
 \# \{n\leq x \; | \; \# ([n, n+\lambda \log n] \cap \mathbb{P})=m \}\gg \frac{x \lambda ^{k}}{k^{2}\log k}e^{-3k^{4}\log k}
 \]
 holds for any sufficiently large $x$, in terms of $m$,  with $k$ in (\ref{22}).  \hspace{10.3cm}  $\Box$ \\

 \noindent
 ({\it Proof of Corollary} 1.3) Since $f$ is non-decreasing and $f(x)\geq 1$, the function $g(x)=20xf(x)$ satisfies $g^{-1}(x)\leq x$. The condition 
 \[
 \frac{f^{\prime}(x)}{f(x)}\leq \frac{1}{x\log x}
 \]
 implies that $\frac{f(x)}{\log x}$ is monotonically decreasing. Therefore, 
 \[
 \frac{f(g^{-1}(x))}{\log g^{-1}(x)}\geq \frac{f(x)}{\log x}.
 \]
 Hence the statement of Corollary 1.3 follows from Theorem 1.1. \hspace{4cm} $\Box$ \\

 \noindent
 ({\it Proof of Corollary} 1.4) If $f(x)=C_{k}$ with $C_{k}\geq 8e^{\gamma}k\log k$, then the  conditions of Theorem 1.1 hold and $g(x)=20C_{k}x$. Hence $g^{-1}(x)=\frac{x}{20C_{k}}$ and $f(g^{-1}(x))=C_{k}$. Thus the statement of Corollary 1.4 follows from Theorem 1.1. \hspace{7.2cm} $\Box$

%%%%%%%%%%%%%%%%%%%%%%%%%%%%%%%%%%%%%%%%%%%%%%%%%%%%%%%%%%%%%%%%%%%%%%%%%%%%%%%%%%%%%%%%%%%%%%%%%%%%%%%%%%%%
%%%%%%%%%%%%%%%%%%%%%%%%%%%%%%%%%%%%%%%%%%%%%%%%%%%%%%%%%%%%%%%%%%%%%%%%%%%%%%%%%%%%%%%%%%%%%%%%%%%%%%%%%%%%

\section{A lower bound for the ratio of integrals in Maynard's sieve}
In this section, we prove that the inequality (\ref{IJ}) holds for any sufficiently large $k$. First, we briefly see the explicit constructions of the functions $F$ and $F_{1}$ in Maynard's paper \cite{May2}, which are obtained by optimizing multidimensional Selberg sieve weight. Let $\psi :[0, \infty)\to [0, 1]$ be a smooth non-increasing function that has a compact support in $[0, 1]$ and equal to $1$ identically  in $[0, \frac{9}{10}]$. For $k\in \mathbb{Z}_{\geq 2}$, define $F: \mathbb{R}^{k}\to \mathbb{R}$ by
\[
F(t_{1}, \ldots ,t_{k})=\psi \left( \sum _{i=1}^{k}t_{i} \right)\prod _{i=1}^{k}\frac{\psi (\sqrt{k}t_{i})}{1+t_{i}k\log k },
\]
and define $F_{1}: \mathbb{R}^{k}\to \mathbb{R}$ by 
\[
F_{1}(t_{1}, \ldots ,t_{k})=\prod _{i=1}^{k}\frac{\psi (\sqrt{k}t_{i})}{1+t_{i}k\log k}.
\]
For a square-integrable function $G: \mathbb{R}^{k}\to \mathbb{R}$, put
\[
I_{k}(G):=\int _{0}^{\infty}\ldots \int _{0}^{\infty}G(t_{1}, \ldots ,t_{k})^{2} dt_{1}\cdots dt_{k},
\]
\[
J_{k}(G):=\int _{0}^{\infty}\ldots \int _{0}^{\infty}\left( \int _{0}^{\infty}G (t_{1}, \ldots ,t_{k}) dt_{k} \right)^{2}dt_{1}\cdots dt_{k-1}.
\]
Maynard \cite{May2} proved that the above $F$ and $F_{1}$ satisfy
\begin{equation}
\label{M1}
I_{k}(F)\leq I_{k}(F_{1})\ll I_{k}(F),
\end{equation}
\begin{equation}
\label{M2}
J_{k}(F)\leq J_{k}(F_{1})\ll J_{k}(F),
\end{equation}
and
\begin{equation}
\label{M3}
\frac{J_{k}(F_{1})}{I_{k}(F_{1})}=\frac{\log k}{4k}\left(1+O\left( \frac{1}{\log k} \right) \right).
\end{equation}
It follows from these estimates that there exists a positive constant $c_{IJ}=c_{IJ}(\psi)$ for which 
\begin{equation}
\label{M4}
\frac{J_{k}(F)}{I_{k}(F)}=c_{IJ}\frac{\log k}{k}\left(1+O\left( \frac{1}{\log k} \right) \right)
\end{equation}
holds. We see below that one can take $c_{IJ}\geq \frac{1}{4}-\varepsilon$ for any $\varepsilon >0$ with an appropriate choice of $\psi =\psi _{\varepsilon}$. 
\begin{prop}
For any $\varepsilon >0$, there exists an appropriate function $\psi$ for which (\ref{M4}) holds with $c_{IJ}\geq \frac{1}{4}-\varepsilon$. 
\end{prop}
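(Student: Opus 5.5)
By (\ref{M3}), it suffices to show two things for a suitable $\psi$: that $I_{k}(F)\leq I_{k}(F_{1})$, and that $J_{k}(F)\geq (1-\varepsilon^{\prime})J_{k}(F_{1})$ for large $k$, where $\varepsilon^{\prime}\to 0$ as $\varepsilon\to 0$. The first holds automatically because $0\leq\psi\leq 1$. Combining the two gives
\[
\frac{J_{k}(F)}{I_{k}(F)}\geq (1-\varepsilon^{\prime})\frac{J_{k}(F_{1})}{I_{k}(F_{1})}=(1-\varepsilon^{\prime})\frac{\log k}{4k}\left(1+O\left(\frac{1}{\log k}\right)\right),
\]
which is the claim with $c_{IJ}\geq \frac14-\varepsilon$.

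So the work is to compare $J_{k}(F_{1})-J_{k}(F)$ with $J_{k}(F_{1})$. The two functions agree wherever $\sum t_{i}\leq \theta$, where $\theta<1$ is the point up to which $\psi\equiv 1$. The paper uses $\theta=\frac{9}{10}$; I would instead choose $\psi=\psi_{\varepsilon}$ equal to $1$ on $[0,1-\delta]$ with $\delta=\delta(\varepsilon)$ small. Then
\[
J_{k}(F_{1})-J_{k}(F)\leq 2\int\!\!\cdots\!\!\int \left(\int_{0}^{\infty}F_{1}\,dt_{k}\right)\left(\int_{0}^{\infty}F_{1}\mathbf{1}_{\sum t_{i}>1-\delta}\,dt_{k}\right)dt_{1}\cdots dt_{k-1}.
\]
Next write $F_{1}=\prod_{i} g(t_{i})$, where $g(t)=\psi(\sqrt{k}t)/(1+tk\log k)$, and follow Maynard's probabilistic argument. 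Normalise $g^{2}$ on the first $k-1$ coordinates to a probability density, and view the inner integral in $t_{k}$ as a weighted measure. The measure of the bad set $\sum t_{i}>1-\delta$ is then controlled by Chebyshev's inequality, using the mean $\mu$ and variance of $\sum_{i<k}t_{i}$ under that density.

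Maynard's computation gives $\mu\approx \frac{k\log k}{k\log k}\cdot\frac{1}{\log k}\cdot(\text{const})$, that is, $\mu\leq 1-\Theta(1/\log k)$ up to constants. The variance is $O(1/(k\log k))\cdot$ polylogarithmic factors, using the truncation $t_{i}\leq k^{-1/2}$. The $t_{k}$-integral contributes at most $O(k^{-1/2})$. Hence the bad set has relative measure $o(1)$ as $k\to\infty$ for any fixed $\delta<1-\mu$, which gives $J_{k}(F)=(1-o(1))J_{k}(F_{1})$.

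The main obstacle is this concentration step. One must check that the mean of $\sum t_{i}$ under the $g^{2}$-weighting stays strictly below $1-\delta$, uniformly in large $k$, and that the cross term involving $\int F_{1}\,dt_{k}$ does not spoil the Chebyshev bound. If the raw mean turns out to sit close to $1$, I would first try rescaling $F_{1}$'s denominator (replacing $k\log k$ by $A k\log k$ with $A$ slightly larger). This costs only a factor $1+O(1/\log k)$ in (\ref{M3}) and pushes the mean below $1-\delta$. After that, letting $\delta\to 0$ along with $\varepsilon$ completes the argument.
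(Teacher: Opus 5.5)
Your reduction is the paper's. Since $0\le\psi\le 1$ gives $I_k(F)\le I_k(F_1)$, (\ref{M3}) reduces everything to $J_k(F)\ge(1-o(1))J_k(F_1)$, and your bound for $J_k(F_1)-J_k(F)$ is correct. The gap is the concentration step, which is the entire content of the proof: you leave it open, and the figures you quote for it are wrong.

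Write $g(t)=\psi(\sqrt{k}\,t)/(1+tk\log k)$ and $G=\int_0^\infty g$. Since $t_k\le k^{-1/2}$ on the support, $\int F_1\mathbf{1}_{\{\sum t_i>1-\delta\}}\,dt_k\le G\prod_{i<k}g(t_i)\,\mathbf{1}_{\{\sum_{i<k}t_i>1-\delta-k^{-1/2}\}}$. Therefore
\[
J_k(F_1)-J_k(F)\le 2J_k(F_1)\,\Pr\Bigl(\sum_{i<k}T_i>1-\delta-k^{-1/2}\Bigr)
\]
for independent $T_i$ with density $g^2/\int g^2$, so the cross term you worry about causes no trouble. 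Substituting $u=tk\log k$ gives $\int g^2\sim\frac{1}{k\log k}$ and $\int t g^2\le\frac{\log(1+\sqrt{k}\log k)}{(k\log k)^2}\sim\frac{1}{2k^2\log k}$. Hence $\mathbb{E}\sum_{i<k}T_i\le\frac12+o(1)$ (in fact it tends to $\frac12$). The bound $t^2\le k^{-1/2}t$ gives $\mathrm{Var}\sum_{i<k}T_i=O(k^{-1/2})$. This is what the paper's choice $\mu=\frac12$ in (\ref{M10}) encodes: the main terms there combine to $(2\mu-1)^2/4$.

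By contrast, your $\mu\approx\mathrm{const}/\log k$ is false. Your variance is too small by roughly a factor $\sqrt{k}$, which is harmless since any $o(1)$ would do. Your other figure, $\mu\le 1-\Theta(1/\log k)$, is true but useless. It is the size of the mean for Maynard's original weight, which is cut off much later than $t=k^{-1/2}$; it is precisely the cutoff at $k^{-1/2}$ that halves the mean here. If that bound were all you knew, ``for any fixed $\delta<1-\mu$'' would be vacuous, since $\psi$, and hence $\delta$, cannot depend on $k$ in (\ref{M4}).

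Your fallback does not repair this. Only $\psi$ is at your disposal, because (\ref{M1})--(\ref{M3}) concern this particular $F_1$. Moreover, replacing $k\log k$ by $Ak\log k$ turns $(\int g)^2/\int g^2$, which equals $J_k(F_1)/I_k(F_1)$, into $\sim\frac{\log k}{4Ak}$. That is a loss of a factor $1/A$, not $1+O(1/\log k)$.

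Once the mean $\frac12$ and the variance $O(k^{-1/2})$ are established, Chebyshev closes your argument for every fixed $\delta<\frac12$. Your version is then slightly cleaner than the paper's. It avoids the final passage from $\mathbf{1}_{[0,1]}$ to a smooth $\psi$. It also already covers Maynard's own $\psi$ with $\delta=\frac1{10}$, so neither a special choice of $\psi$ nor an $\varepsilon$-loss is needed.
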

\begin{proof}
Since there exists some function $\psi$ which satisfies the conditions above and is identically $1$ in $[0, 1-\delta]$ for any fixed $\delta >0$, to obtain the lower bound for $c_{IJ}$, it suffices to compute the integrals under the assumption that $\psi$ is the characteristic function of the interval $[0,1]$. Then, since $\psi (\sqrt{k}t_{i})$ is the characteristic  function of the interval  $[0, 1/\sqrt{k}]$, 
\begin{equation}
\label{M5}
\begin{aligned}
J_{k}(F_{1})&=\int _{0}^{\frac{1}{\sqrt{k}}}\ldots \int _{0}^{\frac{1}{\sqrt{k}}} \left( \int _{0}^{\frac{1}{\sqrt{k}}} \frac{dt_{k}}{1+t_{k}k\log k} \right)^{2}\frac{dt_{1}\cdots dt_{k-1}}{\prod _{i=1}^{k-1}(1+t_{i}k\log k)^{2}} \\
&=\left( \frac{\log (1+\sqrt{k}\log k)}{k\log k} \right)^{2}\left(\frac{1}{k\log k}\left(1-\frac{1}{1+\sqrt{k}\log k} \right) \right)^{k-1}.
\end{aligned}
\end{equation}
On the other hand, 
\begin{equation}
\label{M6}
\begin{aligned}
J_{k}(F)&\geq \underset{\substack{t_{1}+\ldots +t_{k}\leq 1-\frac{1}{\sqrt{k}} \\ 0\leq t_{1}, \ldots ,t_{k}\leq \frac{1}{\sqrt{k}}}}{\int \ldots \int} \left(\int _{0}^{\frac{1}{\sqrt{k}}}\frac{dt_{k}}{1+t_{k}k\log k} \right)^{2}\frac{dt_{1}\cdots dt_{k-1}}{\prod _{i=1}^{k-1}(1+t_{i}k\log k)^{2}} \\
&=\left( \frac{\log (1+\sqrt{k}\log k )}{k\log k} \right)^{2} \underset{\substack{t_{1}+\ldots +t_{k}\leq 1-\frac{1}{\sqrt{k}} \\ 0\leq t_{1}, \ldots ,t_{k}\leq \frac{1}{\sqrt{k}}}}{\int \ldots \int}   \frac{dt_{1}\cdots dt_{k-1}}{\prod _{i=1}^{k-1}(1+t_{i}k\log k)^{2}}.
\end{aligned}
\end{equation}
Put
\[
K_{k}(F):=\underset{\substack{t_{1}+\ldots +t_{k}\leq 1-\frac{1}{\sqrt{k}} \\ 0\leq t_{1}, \ldots ,t_{k}\leq \frac{1}{\sqrt{k}}}}{\int \ldots \int} \frac{dt_{1}\cdots dt_{k-1}}{\prod _{i=1}^{k-1}(1+t_{i}k\log k)^{2}}
\]
and decompose this by
\begin{equation}
\label{M7}
K_{k}(F)=K_{k}^{\prime}(F)-E_{k}(F),
\end{equation}
where
\begin{equation}
\label{M8}
K_{k}^{\prime}(F)=\underset{\substack 0\leq t_{1}, \ldots ,t_{k}\leq \frac{1}{\sqrt{k}} }{\int \ldots \int} \frac{dt_{1}\cdots dt_{k-1}}{\prod _{i=1}^{k-1}(1+t_{i}k\log k)^{2}}=\left(\frac{1}{k\log k} \left(1-\frac{1}{\sqrt{k}\log k} \right) \right)^{k-1},
\end{equation}
\begin{equation}
\label{M9}
E_{k}(F):=\underset{\substack{t_{1}+\ldots +t_{k}\geq 1-\frac{1}{\sqrt{k}} \\ 0\leq t_{1}, \ldots ,t_{k}\leq \frac{1}{\sqrt{k}}}}{\int \ldots \int} \frac{dt_{1}\cdots dt_{k-1}}{\prod _{i=1}^{k-1}(1+t_{i}k\log k)^{2}}.
\end{equation}
To evaluate $E_{k}(F)$, it is useful to keep
\[
\int _{0}^{\frac{1}{\sqrt{k}}}\frac{dt}{(1+tk\log k)^{2}}=\frac{1}{k\log k}\left(1-\frac{1}{1+\sqrt{k}\log k} \right)
\]
and 
\begin{align*}
\int _{0}^{\frac{1}{\sqrt{k}}}\frac{t}{(1+tk\log k)^{2}}dt  
&=\frac{1}{k\log k}\left(\int _{0}^{\frac{1}{\sqrt{k}}} \frac{dt}{1+tk\log k}-\int _{0}^{\frac{1}{\sqrt{k}}} \frac{dt}{(1+tk\log k)^{2}} \right) \\
&=\frac{\log (1+\sqrt{k}\log k)}{(k\log k)^{2}}-\frac{1}{(k\log k)^{2}}\left(1-\frac{1}{1+\sqrt{k}\log k} \right) \\
&\sim \frac{1}{2k^{2}\log k}
\end{align*}
in mind. In the integration of (\ref{M9}), since $t_{1}+\ldots +t_{k-1}\geq 1-\frac{1}{\sqrt{k}}$, for any $0\leq \mu <1-\frac{1}{\sqrt{k}}$, we have
\[
t_{1}+\ldots +t_{k-1}-\mu \geq 1-\frac{1}{\sqrt{k}}-\mu >0.
\]
Therefore, 
\[
\frac{1}{(1-\frac{1}{\sqrt{k}}-\mu )^{2}}(t_{1}+\ldots +t_{k-1}-\mu )^{2}\geq 1.
\]
We remove the condition $t_{1}+\ldots +t_{k-1}\geq 1-\frac{1}{\sqrt{k}}$ in (\ref{M9}) by multiplying this to the integrand. Thus 
\begin{equation}
\label{M10}
\begin{aligned}
E_{k}(F)&\leq \frac{1}{\left(1-\frac{1}{\sqrt{k}}-\mu \right)^{2}}\underset{\substack{0\leq t_{1}, \ldots ,t_{k-1}\leq \frac{1}{\sqrt{k}}}}{\int \ldots \int} \frac{(t_{1}+\ldots +t_{k-1}-\mu )^{2}}{\prod _{i=1}^{k-1}(1+t_{i}k\log k)^{2}}dt_{1}\cdots dt_{k-1} \\
&= \frac{1}{\left(1-\frac{1}{\sqrt{k}}-\mu \right)^{2}}\left\{ \sum _{j=1}^{k-1}I_{j}^{(1)}+\underset{j\neq l}{\sum _{j,l=1}^{k-1}}I_{j,l}^{(2)}-2\mu \sum _{j=1}^{k-1}I_{j}^{(3)}+\mu ^{2}I^{(4)} \right\},
\end{aligned}
\end{equation}
where
\[
I_{j}^{(1)}:=\underset{\substack0\leq {t_{1}, \ldots ,t_{k-1}\leq \frac{1}{\sqrt{k}}}}{\int \ldots \int} \frac{t_{j}^{2}}{\prod _{i=1}^{k-1}(1+t_{i}k\log k)^{2}}dt_{1}\cdots dt_{k-1},
\]
\[
I_{j,l}^{(2)}:=\underset{\substack{0\leq t_{1}, \ldots ,t_{k-1}\leq \frac{1}{\sqrt{k}}}}{\int \ldots \int} \frac{t_{j}t_{l}}{\prod _{i=1}^{k-1}(1+t_{i}k\log k)^{2}}dt_{1}\cdots dt_{k-1},
\]
\[
I_{j}^{(3)}:=\underset{\substack{0\leq t_{1}, \ldots ,t_{k-1}\leq \frac{1}{\sqrt{k}}}}{\int \ldots \int} \frac{t_{j}}{\prod _{i=1}^{k-1}(1+t_{i}k\log k)^{2}}dt_{1}\cdots dt_{k-1}
\]
for $j, l=1, \ldots ,k-1$, and
\[
I^{(4)}:=\underset{\substack{0\leq t_{1}, \ldots ,t_{k-1}\leq \frac{1}{\sqrt{k}}}}{\int \ldots \int} \frac{1}{\prod _{i=1}^{k-1}(1+t_{i}k\log k)^{2}}dt_{1}\cdots dt_{k-1}.
\]
By using $t_{j}^{2}\leq \frac{1}{\sqrt{k}}t_{j}$ for $t_{j}\in [0, 1/\sqrt{k}]$, it follows that 
\begin{align*}
I_{j}^{(1)}&\leq \frac{1}{\sqrt{k}}\int _{0}^{\frac{1}{\sqrt{k}}} \frac{t}{(1+tk\log k)^{2}} dt\left( \int _{0}^{\frac{1}{\sqrt{k}}}\frac{dt}{1+tk\log k} \right)^{k-2} \\
&=\frac{1}{\sqrt{k}}\left\{ \frac{\log (1+\sqrt{k}\log k)}{(k\log k)^{2}}-\frac{1}{(k\log k)^{2}}\left(1-\frac{1}{1+\sqrt{k}\log k} \right) \right\} \left\{\frac{1}{k\log k}\left(1-\frac{1}{1+\sqrt{k}\log k} \right) \right\}^{k-2} \\
&\leq (1+o(1))\frac{\log k}{2\sqrt{k}}\frac{1}{(k\log k)^{k}}, 
\end{align*}
and by direct computations, we have
\[
I_{j,l}^{(2)}=\frac{1}{4}(1+o(1))(\log k)^{2}\frac{1}{(k\log k)^{k+1}},
\]
\[
I_{j}^{(3)}=\frac{1}{2}(1+o(1))\log k \frac{1}{(k\log k)^{k}},
\]
and 
\[
I^{(4)}=(1+o(1))\frac{1}{k(\log k)^{k-1}}.
\]
Inserting these into (\ref{M10}), we have
\[
E_{k}(F)\leq \frac{1}{\left(1-\frac{1}{\sqrt{k}}-\mu \right)^{2}}\frac{(2\mu -1)^{2}}{4(k\log k)^{k-1}}+o\left( \frac{1}{(k\log k)^{k-1}} \right).
\]
By putting $\mu =\frac{1}{2}$, we obtain 
\begin{equation}
\label{M11}
E_{k}(F)=o\left(\frac{1}{(k\log k)^{k-1}} \right).
\end{equation}
Since by (\ref{M7}), (\ref{M8}) and (\ref{M11}) we have
\[
K_{k}(F)\sim \frac{1}{(k\log k)^{k-1}}.
\]
Therefore, by (\ref{M6}), 
\begin{equation}
\label{M12}
J_{k}(F)\geq (1+o(1))\frac{\log ^{2}(1+\sqrt{k}\log k)}{(k\log k)^{k+1}}.
\end{equation}
By (\ref{M5}) and (\ref{M12}), we have
\[
J_{k}(F)\geq (1+o(1))J_{k}(F_{1}),
\]
thus it follows that 
\[
\frac{J_{k}(F)}{I_{k}(F)}\geq (1+o(1))\frac{J_{k}(F_{1})}{I_{k}(F_{1})}=\frac{1}{4}(1+o(1))\frac{\log k}{k}.
\]
Actually, the function $\psi$ cannot be taken to be a characteristic function of the interval $[0,1]$, but  since the characteristic function can be approximated uniformly with infinite precision by $\psi$,  the above inequality is valid if we replace $\frac{1}{4}$ with  $\frac{1}{4}-\varepsilon$ for any $\varepsilon >0$. 
\end{proof}

%%%%%%%%%%%%%%%%%%%%%%%%%%%%%%%%%%%%%%%%%%%%%%%%%%%%%%%%%%%%%%%%%%%%%%%%%%%%%%%%%%%%%%%%%%%%%%%%%%%%%%%%%%%%
%%%%%%%%%%%%%%%%%%%%%%%%%%%%%%%%%%%%%%%%%%%%%%%%%%%%%%%%%%%%%%%%%%%%%%%%%%%%%%%%%%%%%%%%%%%%%%%%%%%%%%%%%%%%

\section{Acknowledgements}
This work is partially supported by the JSPS, KAKENHI Grant Number 24K06697. The author sincerely thanks  the referee for reading this paper carefully and for providing valuable comments and suggestions. 

%%%%%%%%%%%%%%%%%%%%%%%%%%%%%%%%%%%%%%%%%%%%%%%%%%%%%%%%%%%%%%%%%%%%%%%%%%%%%%%%%%%%%%%%%%%%%%%%%%%%%%%%%%%%
%%%%%%%%%%%%%%%%%%%%%%%%%%%%%%%%%%%%%%%%%%%%%%%%%%%%%%%%%%%%%%%%%%%%%%%%%%%%%%%%%%%%%%%%%%%%%%%%%%%%%%%%%%%%

\noindent
Kanto Gakuin University, \\
Kanazawa, Yokohama,\\
Kanagawa, Japan\\
E-mail address: sono@kanto-gakuin.ac.jp

\end{document}